\newtheorem{thm}{Theorem}
\newtheorem{defn}{Definition}
\newtheorem{pro}{Proposition}
\numberwithin{equation}{section} \setcounter{tocdepth}{1}
\begin{document}
\title[]{GRADIENT GIBBS MEASURES WITH PERIODIC BOUNDARY LAWS OF A GENERALIZED SOS MODEL ON A CAYLEY TREE: }

\author{F.~H.~Haydarov, R.~A.~Ilyasova}

\address{F.~H.~Haydarov$^{1,2,3,4}$, R.~A.~Ilyasova$^{4}$}

\address{1. New Uzbekistan University,
54, Mustaqillik Ave., Tashkent, 100007, Uzbekistan,}

\address{2. AKFA University, 17, Kichik Halqa Yuli Street, Tashkent city,
100095, Uzbekistan.}

\address{3. Institute of Mathematics,
9, University str., Tashkent, 100174, Uzbekistan,}

\address{4. National University of Uzbekistan,
 University str., 4 Olmazor district, Tashkent, 100174, Uzbekistan.}

  \email{haydarov\_imc@mail.ru}
 \email{ilyasova.risolat@mail.ru}
\begin{abstract}
 We consider Gradient Gibbs measures corresponding to a periodic boundary law for a generalized SOS model with spin values from a countable set, on Cayley trees. On the Cayley tree, detailed information on Gradient Gibbs measures for models of SOS type are given in \cite{3, 16,8,11} and we continue the works for the generalized SOS model. Namely, in this paper the problem of finding Gradient Gibbs measures which correspond to periodic boundary laws is reduced to a functional equation and by solving the equation all Gradient Gibbs measures with 4 periodic boundary laws are found.
\end{abstract}
\maketitle

{\bf Mathematics Subject Classifications (2010).} 60K35
(primary); 82B05, 82B20 (secondary)

{\bf{Key words.}} Generalized SOS model, spin values, Cayley tree, gradient Gibbs measure, periodic boundary law.

\section{Preliminaries}

The gradient Gibbs measure is a probability measure on the space of gradient fields defined on a manifold. It is often used in statistical mechanics to describe the equilibrium states of a system. The gradient Gibbs measure is derived from the Gibbs measure, which is a probability measure on the space of field configurations. The critical difference is that the gradient Gibbs measure focuses on the gradients of the fields rather than the fields themselves (e.g. \cite{7}).

Specifically, the Gradient Gibbs measure is defined on the set of spin configurations of a system on a Cayley tree.  The Gradient Gibbs measure on a Cayley tree assigns a probability to each possible spin configuration based on the energy of that configuration. The energy of a spin configuration is determined by the interactions between neighboring spins. In the case of a Cayley tree, each spin is coupled to its nearest neighbors along the edges of the tree (see \cite{5}).

Mathematically, the gradient Gibbs measure assigns a probability to each possible configuration of a gradient field on the Cayley tree, based on an energy function. The energy function typically represents the interactions between the gradients of a scalar field or a vector field. The probability of a configuration is proportional to the exponential of the negative energy of that configuration (e.g. \cite{1,4,5, 12, 14}).

 The study of random field $\xi_x$ from a lattice graph (e.g., $\mathbb{Z}^d$ or a Cayley tree $\Gamma^k$ ) to a measure space $(E, \mathcal{E})$ is a central component of ergodic theory and statistical physics.
In many classical models from physics (e.g., the Ising model, the Potts model, the SOS model), $E$ is a finite set (i.e., with a finite underlying measure $\lambda$ ), and $\xi_x$ has a physical interpretation as the spin of a particle at location $x$ in a crystal lattice (detail in \cite{1, 2, 3, 6, 7, 8, 9, 10, 14, 15}).

Let us give basic definitions and some known facts related to (gradient) Gibbs measures. The Cayley tree $\Gamma^{k}=(V, L)$ of order $k \geq 1$ is an infinite tree, i.e. connected and undirected graph without cycles, each vertex of which has exactly $k+1$ edges. Here $V$ is the set of vertices of $\Gamma^{k}$ va $L$ is the set of its edges.

Consider models where the spin takes values in the set $\Phi\subseteq \mathbb{R}_{\infty}^{+}$,
and is assigned to the vertices of the tree. Let $\Omega_A=\Phi^A$ be the set of all configurations
on $A$ and $\Omega:=\Phi^V$. A partial order $\preceq$ on $\Omega$ defined pointwise by stipulating that $\sigma_1 \preceq \sigma_2$ if and only if $\sigma_1(x) \leq \sigma_2(x)$ for all $x \in V$. Thus $(\Omega, \preceq)$ is a poset, and whenever we consider $\Omega$ as a poset then it will always be with respect to this partial order. The poset $\Omega$ is complete. Also, $\Omega$ can be considered as a metric space with respect to the metric $\rho: \Omega \times \Omega \rightarrow \mathbb{R}^{+}$ given by
$$
\rho\left(\left\{\sigma(x_n)\right\}_{x_n \in V},\left\{\sigma^{\prime}(x_n)\right\}_{x_n \in V}\right)=\sum_{n \geq 0} 2^{-n}\mathcal{X}_{\sigma(x_n)\neq\sigma^{\prime}(x_n)},
$$
where $V=\{x_0, x_1, x_2, ....\}$ and $\mathcal{X}_A$ is the indicator function.

We denote by $\mathcal{N}$ the set of all finite subsets of $V$. For each $A\in V$ let $\pi_A: \Omega \rightarrow\Phi^{A}$ be given by $\pi_A\left(\sigma_x\right)_{x\in V}=\left(\sigma_x\right)_{x\in A}$ and let $\mathcal{C}_A=\pi_A^{-1}\left(\mathcal{P}\left(\Phi^{A}\right)\right)$. Let $\mathcal{C}=\bigcup_{A \in \mathcal{N}} \mathcal{C}_A$ and $\mathcal{F}$ is the smallest sigma field containing $\mathcal{C}$. Write $\mathcal{T}_{\Lambda}=\mathcal{F}_{\mathrm{V} \backslash \Lambda}$ and $\mathcal{T}$ for the tail- $\sigma$-algebra, i.e., intersection of $\mathcal{T}_{\Lambda}$ over all finite subsets $\Lambda$ of $\mathbb{L}$ the sets in $\mathcal{T}$ are called tail-measurable sets.

\begin{defn}\label{Definition 6.14.}\cite{5} Let $P_\Lambda: \Omega \rightarrow \overline{\mathbb{R}}:=\mathbb{R}\cup\{-\infty, \infty\}$ be $\mathcal{F}_\Lambda$-measurable mapping for all $\Lambda \in \mathcal{N}$, then the collection $P=\left\{P_\Lambda\right\}_{\Lambda\in \mathcal{N}}$ is called \textbf{a potential}. Also, the following expression
\begin{equation}
H_{\Delta, P}(\sigma) \stackrel{\text { def }}{=} \sum_{\Delta \cap \Lambda \neq \varnothing, \Lambda \in \mathcal{N}} P_\Lambda(\sigma), \quad \forall \sigma \in \Omega.
\end{equation}
is called \textbf{Hamiltonian} $H$ associated to the potential $P$.
\end{defn}

For a fixed inverse temperature $\beta>0$, the Gibbs specification is determined by a family of probability kernels $\zeta=\left(\zeta_{\Lambda}\right)_{\Lambda \in \mathcal{N}}$ defined on $\Omega_{\Lambda} \times \mathcal{F}_{\Lambda^c}$ by the Boltzmann-Gibbs weights
\begin{equation}\label{xino}
\zeta_{\Lambda}(\sigma_{\Lambda}\mid\omega)=\frac{1}{\mathbf{Z}_{\Lambda}^\omega} e^{-\beta H_{\Lambda, P}^\omega\left(\sigma_{\Lambda}\omega\right)}
\end{equation}
where $\mathbf{Z}_{\Lambda}^\omega=\sum_{\sigma \in \Omega_{\Lambda}} e^{-\beta H_{\Lambda, P}^\omega\left(\sigma_{\Lambda}\right)}$ is the partition function, related to free energy.

 From \cite{5}, the family of mappings $\{\zeta_{\Lambda}(\sigma \mid \omega)\}_{\Lambda\in\mathcal{N}}$ is the family of  proper $\mathcal{F}_\Lambda$-measurable quasi-probability kernels. Thus, the collection $\mathcal{V}=\left\{\zeta_\Lambda\right\}_{\Lambda \in \mathcal{N}}$ will be called an $\mathbb{F}$-\emph{\textit{specification}} if $\zeta_{\Delta}=\zeta_{\Delta} \zeta_{\Lambda}$ whenever $\Lambda, \Delta \in \mathcal{N}$ with $\Lambda \subseteq \Delta$.  Let $\mathcal{V}=\left\{\zeta_\Lambda\right\}_{\Lambda \in \mathcal{N}}$ be an $\mathbb{F}$-specification; then a probability measure $\mu \in \mathrm{P}(\mathcal{F})$ is called a \emph{\textit{Gibbs measure with specification $\mathcal{V}$}} if $\mu=\mu \zeta_\Lambda$ for each $\Lambda \in \mathcal{N}$.

\section{Gradient Gibbs measure}

For any configuration $\omega=(\omega(x))_{x \in V} \in \mathbb{Z}^{V}$ and edge $e=\langle x, y\rangle$ of $\vec{L}$ (oriented) the difference along the edge $e$ is given by $\nabla \omega_e=\omega_y-\omega_x$ and $\nabla \omega$ is called the gradient field of $\omega$.
The gradient spin variables are now defined by $\eta_{\langle x, y\rangle}=\omega_y-\omega_x$ for each $\langle x, y\rangle$.
The space of gradient configurations is denoted by $\Omega^{\nabla}$. The measurable structure on the space $\Omega^{\nabla}$ is given by $\sigma$-algebra
$$
\mathcal{F}^{\nabla}:=\sigma\left(\{\eta_e \mid e \in \vec{L}\}\right).
$$
Note that $\mathcal{F}^{\nabla}$ is the subset of $\mathcal{F}$ containing those sets that are invariant under translations $\omega \rightarrow \omega+c$ for $c \in E$.
Similarly, we define
$$
\mathcal{T}_{\Lambda}^{\nabla}=\mathcal{T}_{\Lambda} \cap \mathcal{F}^{\nabla}, \ \mathcal{F}_{\Lambda}^{\nabla}=\mathcal{F}_{\Lambda} \cap \mathcal{F}^{\nabla}
$$
For nearest-neighboring (n.n.) interaction potential $\Phi=\left(\Phi_b\right)_b$, where $b=\langle x, y\rangle$ is an edge, define symmetric transfer matrices $Q_b$ by
$$
Q_b\left(\omega_b\right)=e^{-\left(\Phi_b\left(\omega_b\right)+|\partial x|^{-1} \Phi_{\{x\}}\left(\omega_x\right)+|\partial y|^{-1} \Phi_{\{w\}}\left(\omega_\nu\right)\right)}.
$$
Define the Markov (Gibbsian) specification as
$$
\gamma_{\Lambda}^{\Phi}\left(\sigma_{\Lambda}=\omega_{\Lambda} \mid \omega\right)=\left(Z_{\Lambda}^{\Phi}\right)(\omega)^{-1} \prod_{b \cap \Lambda \neq 0} Q_b\left(\omega_b\right) .
$$
If for any bond $b=\langle x, y\rangle$ the transfer operator $Q_b\left(\omega_b\right)$ is a function of gradient spin variable $\zeta_b=\omega_y-\omega_x$ then the underlying potential $\Phi$ is called a gradient interaction potential. Note that for all $A \in \mathcal{F}^{\nabla}$, the kernels $\gamma_{\Lambda}^{\Phi}(A, \omega)$ are $\mathcal{F}^{\nabla}$-measurable functions of $\omega$, it follows that the kernel sends a given measure $\mu$ on $\left(\Omega, \mathcal{F}^{\nabla}\right)$ to another measure $\mu \gamma_{\Lambda}^{\Phi}$ on $\left(\Omega, \mathcal{F}^{\nabla}\right)$. A measure $\mu$ on $\left(\Omega, \mathcal{F}^{\nabla}\right)$ is called a \emph{gradient Gibbs measure} if it satisfies the equality $\mu\gamma_{\Lambda}^{\Phi}=\mu$ (detail in \cite{10,11,13}).

Note that, if $\mu$ is a Gibbs measure on $(\Omega, \mathcal{F})$, then its restriction to $\mathcal{F}^{\nabla}$ is a gradient Gibbs measure.

A boundary law is called $q$-periodic if $l_{x y}(i+q)=l_{x y}(i)$ for every oriented edge $\langle x, y\rangle \in$ $\vec{L}$ and each $i \in \mathbb{Z}$.

It is known that there is a one-to-one correspondence between boundary laws and tree indexed Markov chains if the boundary laws are normalisable in the sense of Zachary [15]:

\begin{defn}\label{Definition 3.4} (Normalisable boundary laws). A boundary law $l$ is said to be normalisable if and only if
$$
\sum_{\omega_x \in \mathbb{Z}}\left(\prod_{z \in \partial x} \sum_{\omega_z \in \mathbb{Z}} Q_{z x}\left(\omega_x, \omega_z\right) l_{z x}\left(\omega_z\right)\right)<\infty
$$
for any $x \in V$.\end{defn}
The correspondence now reads the following:

\begin{thm}\label{Theorem 3.5} (Theorem 3.2 in \cite{15}). For any Markov specification $\gamma$ with associated family of transfer matrices $\left(Q_b\right)_{b \in L}$ we have

1. Each normalisable boundary law $\left(l_{x y}\right)_{x, y}$ for $\left(Q_b\right)_{b \in L}$ defines a unique tree-indexed Markov chain $\mu \in \mathcal{G}(\gamma)$ via the equation given for any connected set $\Lambda \in \mathcal{S}$
\begin{equation}\label{1.4}
\mu\left(\sigma_{\Lambda \cup \partial \Lambda}=\omega_{\Lambda \cup \partial \Lambda}\right)=\left(Z_{\Lambda}\right)^{-1} \prod_{y \in \partial \Lambda} l_{y y_{\Lambda}}\left(\omega_y\right) \prod_{b \cap \Lambda \neq \emptyset} Q_b\left(\omega_b\right),
\end{equation}
where for any $y \in \partial \Lambda, y_{\Lambda}$ denotes the unique n.n. of $y$ in $\Lambda$.

2. Conversely, every tree-indexed Markov chain $\mu \in \mathcal{G}(\gamma)$ admits a representation of the form (3.15) in terms of a normalisable boundary law (unique up to a constant positive factor).
\end{thm}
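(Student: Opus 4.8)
The plan is to follow Zachary's scheme, proving the two implications separately, with the boundary law equation serving as the bridge in both directions.

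For part 1, I would fix a normalisable boundary law $(l_{xy})$ and define, for each connected $\Lambda$, a candidate finite-dimensional distribution $\mu_\Lambda$ on $\Phi^{\Lambda \cup \partial\Lambda}$ by the right-hand side of \eqref{1.4}, taking $Z_\Lambda$ to be the normalising sum. The normalisability hypothesis is precisely what guarantees $0 < Z_\Lambda < \infty$, so each $\mu_\Lambda$ is a genuine probability measure. The heart of the matter is to verify Kolmogorov consistency of the family $\{\mu_\Lambda\}$, and it suffices to check the one-step extension: if $\Lambda' = \Lambda \cup \{x\}$ with $x$ a single vertex adjacent to $\Lambda$, then marginalising $\mu_{\Lambda'}$ over the spins at the new outer-boundary vertices (the neighbours $z$ of $x$ lying outside $\Lambda$) must reproduce $\mu_\Lambda$. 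Carrying out this marginalisation, the edges $\langle x,z\rangle$ contribute factors $Q_{zx}(\omega_x,\omega_z)$ and the boundary carries the weights $l_{zx}(\omega_z)$; summing over each $\omega_z$ produces $\sum_{\omega_z} Q_{zx}(\omega_x,\omega_z) l_{zx}(\omega_z)$, and the product of these over all outer neighbours $z$ collapses, by the very definition of a boundary law, into the single factor $l_{x\,x_\Lambda}(\omega_x)$, which is exactly the boundary term $\mu_\Lambda$ carries at $x$. The transfer matrix on the retained edge $\langle x_\Lambda, x\rangle$ appears identically in both measures, so $\mu_\Lambda$ is recovered. This bookkeeping of the change $\partial\Lambda \to \partial\Lambda'$, together with the justification for interchanging the (possibly infinite) sums, is the step I expect to be the main obstacle; it is also where normalisability is invoked a second time. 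Once consistency holds, the Kolmogorov extension theorem yields a measure $\mu$, and checking $\mu \in \mathcal{G}(\gamma)$ reduces to the DLR equation $\mu = \mu\gamma_\Lambda$: since $\gamma$ is nearest-neighbour and $\mu_\Lambda$ already has the factorised form \eqref{1.4}, the conditional law of the interior given the boundary reads off directly and coincides with $\gamma_\Lambda$, while the Markov property is automatic from the edge-product structure.

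For part 2, I would reverse the construction. Given a tree-indexed Markov chain $\mu \in \mathcal{G}(\gamma)$, I would define $l_{xy}(i)$ from the single-site conditionals of $\mu$ on the subtree lying behind the edge $\langle x,y\rangle$, i.e. as the $\mu$-probability that the spin at $x$ equals $i$ after integrating out everything on the $x$-side of the edge, renormalised by an edge-dependent constant. The Markov property forces this quantity to factorise over the branches at $x$, which is exactly the boundary law equation, and normalisability of the resulting $l$ follows from $\mu$ being a probability measure. Uniqueness up to a positive factor is then obtained by comparing \eqref{1.4} for two boundary laws $l, l'$ representing the same $\mu$: taking ratios along a growing exhaustion of connected sets forces $l_{xy} = c_{xy}\, l'_{xy}$ edge by edge, and feeding this back into the boundary law recursion shows the constants $c_{xy}$ must be mutually compatible, leaving only a single global positive factor. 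The delicate point here is precisely establishing that the edge-wise proportionality constants are constrained to match rather than being free.
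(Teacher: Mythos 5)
The paper does not prove this statement at all: it is imported verbatim as Theorem 3.2 of Zachary \cite{15}, so there is no internal proof to compare your argument against. Your outline is a faithful sketch of the standard argument (finite-volume marginals defined by \eqref{1.4}, finiteness of $Z_\Lambda$ from normalisability, one-step Kolmogorov consistency via the boundary-law consistency equation collapsing $\prod_{z}\sum_{\omega_z}Q_{zx}(\omega_x,\omega_z)l_{zx}(\omega_z)$ into $l_{xx_\Lambda}(\omega_x)$, then the DLR check; and, conversely, reconstruction of $l$ from the factorised two-point marginals of a tree-indexed Markov chain), and I see no wrong step --- though the decisive computations are described rather than carried out, and the converse direction in particular would need the precise factorisation $\mu(\sigma_x=i,\sigma_y=j)\propto l_{xy}(i)Q_{xy}(i,j)l_{yx}(j)$ made explicit to pin down the boundary law and its uniqueness up to a constant.
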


The Markov chain $\mu$ defined in (\ref{1.4}) has the transition probabilities
\begin{equation}\label{1.5}
P_{x y}(i, j)=\mu\left(\sigma_y=j \mid \sigma_x=i\right)=\frac{l_{y x}(j) Q_{y x}(j, i)}{\sum_s l_{y x}(s) Q_{y x}(s, i)} .
\end{equation}
The expressions (\ref{1.5}) may exist even in situations where the underlying boundary law $\left(l_{x y}\right)_{x, y}$ is not normalisable. However, the Markov chain given by (\ref{1.5}), in general, does not have an invariant probability measure. Therefore in $[8],[9],[10],[11]$ some nonnormalisable boundary laws are used to give gradient Gibbs measures.

Now we give some results of above-mentioned papers. Consider a model on Cayley tree $\Gamma^k=(V, \vec{L})$, where the spin takes values in the set of all integer numbers $\mathbb{Z}$. The set of all configurations is $\Omega:=\mathbb{Z}^V$.

Now we consider the following Hamiltonian:
\begin{equation}\label{H} H(\sigma)=-J\sum_{\langle x,y \rangle}\alpha(| \sigma_{x}-\sigma_{y}|)| \sigma_{x}-\sigma_{y}|,\end{equation}
where
\[ \alpha(|m|)=
 \begin{cases}
  p, & \text{if $m\in 2\mathbb{Z}$} \\
  q, & \text{if $m\in 2\mathbb{Z}+1$}.
   \end{cases}\]
Note that if $p=q$ then the considered model is called SOS model.

For the Hamiltonian (\ref{1.5}) the transfer operator is defined by
$$Q(i,j)=e^{-\beta J \alpha(| i-j|)| i-j|},$$
where $\beta>0$ is the inverse temperature and $J\in\mathbb{R}$.

Also, the boundary law equation of the Hamiltonian can be written as:
\begin{equation}\label{sq}z_{i}=\left(\frac{Q(i,0)+\sum_{j\in Z_{0}}Q(i,j)z_{j}}{Q(0,0)+\sum_{j\in Z_{0}}Q(0,j)z_{j}}\right)^{k}.\end{equation}

 Put $\theta:=\exp (-J \beta)<1$. For translation invariant boundary law, the transfer operator $Q$ reads $Q(i-j)=\theta^{|i-j|}$ for any $i, j \in \mathbb{Z}$.
If $\theta :=e^{-J \beta}<1$ then we can write the equation (\ref{sq}) as
\begin{equation}\label{h}
z_{i}=\left(\frac{\theta^{\alpha(|i|)|i|}+\sum_{j\in Z_{0}}\theta^{\alpha(|i-j|)|i-j|} z_{j}}{1+\sum_{j\in Z_{0}}\theta^{\alpha(|j|)|j|} z_{j}}\right)^k, \ i\in \mathbb{Z}_{0}:=\mathbb{Z}\setminus\{0\}.
\end{equation}
Let $\{z_i\}_{i\in\mathbb{Z}}$ be $q$-periodic sequence, i.e. $z_i=z_{i+q}$ for all $i\in\mathbb{Z}$.

\begin{equation}\label{newh}
\left\{
  \begin{array}{ll}
    z_{1}=\left(\frac{\theta^{\alpha(|1|)}+\sum_{j\in Z_{0}}\theta^{\alpha(|1-j|)|1-j|} z_{j}}{1+\sum_{j\in Z_{0}}\theta^{\alpha(|j|)|j|} z_{j}}\right)^k; \\
    z_{2}=\left(\frac{\theta^{2\alpha(|2|)}+\sum_{j\in Z_{0}}\theta^{\alpha(|2-j|)|2-j|} z_{j}}{1+\sum_{j\in Z_{0}}\theta^{\alpha(|j|)|j|} z_{j}}\right)^k; \\
    ... \qquad ... \qquad ... \qquad ... \qquad ... \\
    z_{q}=\left(\frac{\theta^{q\alpha(|q|)}+\sum_{j\in Z_{0}}\theta^{\alpha(|q-j|)|q-j|} z_{j}}{1+\sum_{j\in Z_{0}}\theta^{\alpha(|j|)|j|} z_{j}}\right)^k.
  \end{array}
\right.
\end{equation}
Let $u_{i}=u_{0}\sqrt[k]{z_{i}}$ for some $u_{0}>0$ then since (\ref{h}) we obtain
$$u_{i}=\frac{...+\theta^{3q}u_{i-3}^{k}+\theta^{2p}u_{i-2}^{k}+\theta^{q}u_{i-1}^{k}+u_{i}^{k}+\theta^{q}u_{i+1}^{k}+\theta^{2p}u_{i+2}^{k}+\theta^{3q}u_{i+3}^{k}+...}
{...+\theta^{3q}u_{-3}^{k}+\theta^{2p}u_{-2}^{k}+\theta^{q}u_{-1}^{k}+u_{0}^{k}+\theta^{q}u_{1}^{k}+\theta^{2p}u_{2}^{k}+\theta^{3q}u_{3}^{k}+...}.
$$
We rewrite the last system of equations in the following form:
\begin{equation}\label{hh}
u_{i}=\frac{\sum_{j=1}^{\infty} \theta^{2pj}u_{i-2j}^{k}+\sum_{j=1}^{\infty} \theta^{(2j-1)q}u_{i-2j+1}^{k}+u_{i}^{k}+\sum_{j=1}^{\infty} \theta^{(2j-1)q}u_{i+2j-1}^{k}+\sum_{j=1}^{\infty} \theta^{2pj}u_{i+2j}^{k}}{\sum_{j=1}^{\infty} \theta^{2pj}u_{-2j}^{k}+\sum_{j=1}^{\infty} \theta^{(2j-1)q}u_{-2j+1}^{k}+u_{0}^{k}+\sum_{j=1}^{\infty} \theta^{(2j-1)q}u_{2j-1}^{k}+\sum_{j=1}^{\infty} \theta^{2pj}u_{2j}^{k}},
\end{equation}
where $i\in \mathbb{Z}.$

\begin{pro}\label{new} Let $\{z_i\}_{i\in\mathbb{Z}}$ be $q$-periodic sequence. Then finding $q$-periodic solutions to the system (\ref{h}) is equivalent to  solving the system of equations (\ref{newh}).
\end{pro}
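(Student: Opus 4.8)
The plan is to exploit two structural features of the fixed-point system (\ref{h}): its denominator is independent of the index $i$, and the transfer weights $\theta^{\alpha(|i-j|)|i-j|}$ depend on $(i,j)$ only through the difference $i-j$. Adopting the convention $z_0=1$ (so that a $q$-periodic extension automatically satisfies $z_{mq}=1$ for every $m\in\mathbb Z$), I would first rewrite the right-hand side of the $i$-th equation of (\ref{h}) as $\big(N_i/D\big)^k$, where
\[
D=\sum_{j\in\mathbb Z}\theta^{\alpha(|j|)|j|}z_j,\qquad N_i=\sum_{j\in\mathbb Z}\theta^{\alpha(|i-j|)|i-j|}z_j .
\]
Here the constant $1$ and the isolated term $\theta^{\alpha(|i|)|i|}$ in (\ref{h}) are absorbed as the $j=0$ summands of $D$ and $N_i$ respectively. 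Since a $q$-periodic sequence is bounded and $\theta<1$, both series converge absolutely.

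The core of the argument is to show that $N_i$ is $q$-periodic in $i$. Assuming $z_{j+q}=z_j$ for all $j$, I would substitute $j\mapsto m+q$ and use translation invariance of the weight:
\[
N_{i+q}=\sum_{j\in\mathbb Z}\theta^{\alpha(|i+q-j|)|i+q-j|}z_j=\sum_{m\in\mathbb Z}\theta^{\alpha(|i-m|)|i-m|}z_{m+q}=\sum_{m\in\mathbb Z}\theta^{\alpha(|i-m|)|i-m|}z_m=N_i .
\]
The decisive point is that $i+q-(m+q)=i-m$, so after the shift the weight is literally unchanged, and the final equality is the $q$-periodicity of $z$; absolute convergence justifies the rearrangement. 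Since $D$ does not depend on $i$, it follows that the entire right-hand side $\big(N_i/D\big)^k$ of (\ref{h}) is $q$-periodic in $i$.

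With both sides of (\ref{h}) now $q$-periodic in $i$, the infinitely many equations indexed by $i\in\mathbb Z_0$ reduce to those for a single complete residue system modulo $q$, for which I would take $i=1,2,\dots,q$. It remains only to match these equations with (\ref{newh}): for $i\ge 1$ one has $|i|=i$, hence $\theta^{\alpha(|i|)|i|}=\theta^{i\alpha(|i|)}$, which recovers the exponents $\theta^{\alpha(|1|)}$, $\theta^{2\alpha(|2|)}$, $\dots$, $\theta^{q\alpha(|q|)}$ displayed in (\ref{newh}) term by term. This yields both implications: a $q$-periodic solution of (\ref{h}) restricts to a solution of (\ref{newh}), and conversely a solution of (\ref{newh}), extended $q$-periodically, satisfies (\ref{h}) for every $i\in\mathbb Z_0$ by the periodicity established above.

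I expect the only genuinely delicate step to be the periodicity of the numerator $N_i$, specifically the bookkeeping in the index shift $j\mapsto m+q$ together with the absolute convergence needed to rearrange the series. It is worth stressing that no separate parity discussion is required: because the shift preserves the difference $i-j$, the argument $|i-j|$ of $\alpha$ — and hence its parity — is automatically unchanged, so the two coupling values entering $\alpha$ never have to be tracked through the reduction.
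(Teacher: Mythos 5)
Your proposal is correct and follows essentially the same route as the paper: both arguments absorb the isolated terms as the $j=0$ summand (the paper's ``$z_0=0$'' is evidently a typo for $z_0=1$, the convention you adopt), observe that the denominator is independent of $i$, and establish $q$-periodicity of the numerator via the index shift $j\mapsto j+q$ combined with translation invariance of the weight $\theta^{\alpha(|i-j|)|i-j|}$. Your version is slightly more careful than the paper's in that it records the absolute convergence needed to justify the rearrangement, but the underlying idea is identical.
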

\begin{proof} To prove the Proposition, it is sufficient to show $z_{i}=z_{q+i}$ for all $i\in\{1,2,...,q-1,q\}$. Since $z_{0}=0$, for a fixed $i_0\in \mathbb{Z}$, the numerator of the fraction in (\ref{h}) can be written as
$$\theta^{\alpha(|i_0|)|i_0|}+\sum_{j\in \mathbb{Z}_{0}}\theta^{\alpha(|i_0-j|)|i_0-j|} z_{j}=\sum_{j\in \mathbb{Z}}\theta^{\alpha(|i_0-j|)|i_0-j|} z_{j}.$$
Also, it can be rewritten as
\begin{equation}\label{s}
\sum_{j\in \mathbb{Z}}\theta^{\alpha(|i_0-j|)|i_0-j|} z_{j}=...+\theta^{2p}z_{i_0-2}+\theta^{q}z_{i_0-1}+z_{i_0}+\theta^{q}z_{i_0+1}+\theta^{2p}z_{i_0+2}+...
\end{equation}
Similarly, for $i_0+q$ we have
\begin{equation}\label{ss}
\sum_{j\in \mathbb{Z}}\theta^{\alpha(|i_0+q-j|)|i_0+q-j|} z_{j}=...+\theta^{2p}z_{i_0+q-2}+\theta^{q}z_{i_0+q-1}+z_{i_0+q}+\theta^{q}z_{i_0+q+1}+\theta^{2p}z_{i_0+q+2}+...
\end{equation}
If we change $z_{k+q}$ in (\ref{ss}) to $z_k$ for all $k\in\mathbb{N}$ then we obtain (\ref{s}). Namely, we have proved
$$z_{i_{0}}=\left(\frac{\theta^{i_{0}\alpha(|i_{0}|)}+\sum_{j\in Z_{0}}\theta^{\alpha(|i_{0}-j|)|i_{0}-j|} z_{j}}{i_{0}+\sum_{j\in Z_{0}}\theta^{\alpha(|j|)|j|} z_{j}}\right)^k=\qquad \qquad  \qquad \ \ \ \ \ \ \ \ \ \  \ \ \ \ \  \ \ \  \ \ \ \ \ \ \ \ \ $$
$$\ \ \  \ \ \ \ \ \ \ \ \ \ \ \ \ \ \ \ \ \ \qquad =\left(\frac{(\theta^{i_{0}+q)\alpha(|i_{0}+q|)}+\sum_{j\in Z_{0}}\theta^{\alpha(|i_{0}+q-j|)|i_{0}+q-j|} z_{j}}{i_{0}+q+\sum_{j\in Z_{0}}\theta^{\alpha(|j|)|j|} z_{j}}\right)^k=z_{i_0+q}.
$$\end{proof}

\section{4-periodic boundary laws for $k\geq 2$}
In this section, we find periodic solutions (defined in \cite{16}) to (\ref{h}) which correspond to periodic boundary condition. Namely, for all $m\in \mathbb{Z}$ we consider the following sequence:
\begin{equation}\label{new1}
 u_{n}=\begin{cases}
  1,  \ \emph{if } n=2m; \\
 a,  \ \emph{if } n=4m-1; \\
 b,  \ \emph{if } n=4m+1, \\
   \end{cases}
\end{equation}
where $a$ and $b$ are some positive numbers.

By Proposition \ref{new}, finding solutions that are formed in (\ref{new1}) to (\ref{h}) is equivalent to solving the following system of equations:

$$a=\frac{...+\theta^{4p}a^{k}+\theta^{3q}+\theta^{2p}b^{k}+\theta^{q}+a^{k}+\theta^{q}+\theta^{2p}b^{k}+\theta^{3q}+\theta^{4p}a^{k}+...}
  {...+\theta^{4p}+\theta^{3q}b^{k}+\theta^{2p}+\theta^{q}a^{k}+1+\theta^{q}b^{k}+\theta^{2p}+\theta^{3q}a^{k}+\theta^{4p}+...};$$

$$b=\frac{...+\theta^{4p}b^{k}+\theta^{3q}+\theta^{2p}a^{k}+\theta^{q}+b^{k}+\theta^{q}+\theta^{2p}a^{k}+\theta^{3q}+\theta^{4p}b^{k}+...}
  {...+\theta^{4p}+\theta^{3q}b^{k}+\theta^{2p}+\theta^{q}a^{k}+1+\theta^{q}b^{k}+\theta^{2p}+\theta^{3q}a^{k}+\theta^{4p}+...}.$$
Namely,

$$a=\frac{2(\theta^{q}+\theta^{3q}+...)+(1+2\theta^{4p}+2\theta^{8p}+...)a^{k}+2(\theta^{2p}+\theta^{6p}+...)b^{k}}
{1+2\theta^{2p}+2\theta^{4p}+...+(\theta^{q}+\theta^{3q}+...)(a^{k}+b^{k})};$$

$$b=\frac{2(\theta^{q}+\theta^{3q}+...)+(1+2\theta^{4p}+2\theta^{8p}+...)b^{k}+2(\theta^{2p}+\theta^{6p}+...)a^{k}}
{1+2\theta^{2p}+2\theta^{4p}+...+(\theta^{q}+\theta^{3q}+...)(a^{k}+b^{k})}.$$

Taking into account $\theta <1$ one writes the last system of equations as follows:
\begin{equation} \label{se1}
 a=\frac{\frac{2\theta^{q}}{1-\theta^{2q}}+\frac{1+\theta^{4p}}{1-\theta^{4p}}a^{k}+\frac{2\theta^{2p}}{1-\theta^{4p}}b^{k}}{\frac{1+\theta^{2p}}{1-\theta^{2p}}+\frac{\theta^{q}}{1-\theta^{2q}}(a^{k}+b^{k})}, \quad  b=\frac{\frac{2\theta^{q}}{1-\theta^{2q}}+\frac{1+\theta^{4p}}{1-\theta^{4p}}b^{k}+\frac{2\theta^{2p}}{1-\theta^{4p}}a^{k}}{\frac{1+\theta^{2p}}{1-\theta^{2p}}+\frac{\theta^{q}}{1-\theta^{2q}}(a^{k}+b^{k})}.
 \end{equation}

For all $k\in\mathbb{N}$ and $p,q\in \mathbb{R}$, the analysis of (\ref{se1}) is so difficult and that's why we consider the case $2p=q=1$ and $k=2.$ Then (\ref{se1}) can be written as
\begin{equation}\label{uff}a=\frac{\tau a^{2}+2b^{2}+2}{a^{2}+b^{2}+\tau +2}, \quad b=\frac{\tau b^{2}+2a^{2}+2}{a^{2}+b^{2}+\tau +2},\end{equation}
where $\tau=\theta+\frac{1}{\theta}>2.$

At first, we consider the case $a=b$. The system of equations (\ref{uff}) is reduced to the polynomial equation:
\begin{equation}\label{cc}
2a^{3}-(\tau+2)a^{2}+(\tau+2)a-2=0.
\end{equation}
Since the last equation has a solution $a=1$, we divide both sides of (\ref{cc}) by $a-1$. Consequently, one gets
$$2a^{2}-\tau a+2=0.$$
If $\tau > 4$ then the last quadratic equation has two solutions.

\begin{thm}\label{th1}
Let $\tau=J\beta+\frac{1}{J\beta}$. Then for the model (\ref{H}) on the the Cayley tree of order two the following assertions hold:
\begin{enumerate}
\item If $\tau \leq 4$, then there is precisely one GGM associated to a 2-periodic boundary law.
\item If  $\tau > 4$, then there are precisely three such GGMs.
\end{enumerate}
\end{thm}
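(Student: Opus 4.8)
The plan is to reduce the counting of $2$-periodic GGMs to counting positive solutions of the algebraic system (\ref{uff}) under the constraint that forces $2$-periodicity, and then to invoke the boundary-law/GGM correspondence. First I would observe that within the parametrization (\ref{new1}) the sequence $(u_n)$ has period $4$ in general, but collapses to period $2$ precisely when $a=b$ (then $u_n=1$ for even $n$ and $u_n=a$ for odd $n$). Hence a $2$-periodic boundary law of the prescribed shape corresponds exactly to a solution of (\ref{uff}) with $a=b$; the two equations in (\ref{uff}) then coincide and collapse to the single scalar identity $a\bigl(2a^2+\tau+2\bigr)=(\tau+2)a^2+2$, which is the cubic (\ref{cc}).

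Next, since $a=1$ is visibly a root (as already noted before the statement), I would factor $2a^3-(\tau+2)a^2+(\tau+2)a-2=(a-1)\bigl(2a^2-\tau a+2\bigr)$ and reduce the problem to the quadratic $2a^2-\tau a+2=0$. Its discriminant is $\tau^2-16$, so the number of real roots is governed by the sign of $\tau-4$ (recall $\tau>2$). When these roots are real, Vieta's relations give their product equal to $1$ and their sum equal to $\tau/2>0$, so both are automatically positive; moreover a root equals $1$ only if $2-\tau+2=0$, i.e.\ $\tau=4$. These two facts — positivity and non-coincidence with the trivial root — are exactly what guarantee that the quadratic contributes genuinely new admissible solutions rather than spurious or repeated ones.

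Assembling the cases: for $\tau<4$ the quadratic has no real root, and at $\tau=4$ its only root is the double root $a=1$ (since $2a^2-4a+2=2(a-1)^2$), so in both subcases the unique positive solution is $a=1$; this covers $\tau\le 4$ and yields exactly one measure. For $\tau>4$ the quadratic has two distinct positive roots, both different from $1$, so together with $a=1$ there are three distinct positive solutions. Finally I would pass from solutions to measures via the correspondence of Theorem \ref{Theorem 3.5}: each admissible value of $a$ determines a $2$-periodic boundary law, and distinct normalized solutions give distinct boundary laws (unique up to a positive scalar), hence distinct tree-indexed Markov chains and distinct GGMs.

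The main obstacle I anticipate is not the algebra but this last step: verifying that the passage from the three algebraic solutions to three \emph{distinct} gradient Gibbs measures is exact. Since the relevant periodic boundary laws for this gradient model are of the type the preamble flags as typically non-normalisable, one must check that the GGM construction applies (in the appropriate gradient quotient) and that no two solutions yield the same measure. In particular the two nontrivial roots satisfy $a_1 a_2=1$, so they are interchanged by the spin-reflection symmetry $\sigma\mapsto-\sigma$; I would need to confirm that this symmetry produces two genuinely different measures rather than identifying them, and that neither coincides with the measure attached to $a=1$.
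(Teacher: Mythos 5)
Your proposal is correct and follows essentially the same route as the paper: restrict (\ref{uff}) to $a=b$, obtain the cubic (\ref{cc}), factor out the root $a=1$, and count the roots of $2a^2-\tau a+2=0$ via its discriminant $\tau^2-16$. You in fact supply a few details the paper leaves implicit (positivity of the quadratic's roots via Vieta, the collapse to the double root $a=1$ at $\tau=4$, and distinctness of the resulting measures), but the underlying argument is identical.
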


It is important to consider Gradient Gibbs measures associated with a 4-periodic boundary law for the model (\ref{H}). The following theorem gives us a full description of Gradient Gibbs measures associated to a 4-periodic boundary law.

\begin{thm}\label{th2}
Let $\tau=J\beta+\frac{1}{J\beta}$, $\tau_{cr}^{(1)}\approx 5.73$, and $\tau_{cr}^{(2)}\approx 6.261$. Then for Gradient Gibbs measures associated with a 4-periodic boundary law for the model (\ref{H}) on the Cayley tree of order two the following statements hold:
\begin{enumerate}
\item If $\tau\leq\tau_{cr}^{(1)}$, then there is not any GGM.
\item If $\tau_{cr}^{(1)}<\tau<\tau_{cr}^{(2)}$, then there exists a unique GGM.
\item If  $\tau=\tau_{cr}^{(2)}$, then there are exactly two GGMs.
\item If $\tau_{cr}^{(2)}< \tau <8$, then there are exactly three GGMs.
\item If $\tau=8$, then there are exactly four GGMs.
\item If $\tau >8$, then there are exactly five such GGMs.
\end{enumerate}
\end{thm}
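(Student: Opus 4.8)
The plan is to reduce the whole problem to a complete solution of the polynomial system (\ref{uff}) in the two unknowns $a,b>0$, and then to translate each admissible solution into a gradient Gibbs measure through Theorem \ref{Theorem 3.5}. By Proposition \ref{new} together with the ansatz (\ref{new1}), every $4$-periodic boundary law of the prescribed type corresponds to a pair $(a,b)$ solving (\ref{uff}), the diagonal locus $a=b$ producing exactly the $2$-periodic laws already classified in Theorem \ref{th1}. Accordingly I would first isolate the genuinely $4$-periodic solutions, those with $a\neq b$, and treat $a=b$ separately as the degenerate stratum along which the new branches are born. Throughout I would pass to the symmetric coordinates $s=a+b$ and $p=ab$, since (\ref{uff}) is invariant under $a\leftrightarrow b$.

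First I would subtract the two equations of (\ref{uff}); the factor $(a-b)$ divides out, and on the locus $a\neq b$ this leaves the single linear relation $a^{2}+b^{2}+\tau+2=(\tau-2)(a+b)$, that is $2p=s^{2}-(\tau-2)s+(\tau+2)$. Adding the two equations and eliminating $p$ with this relation yields one quadratic in $s$,
$$(\tau-2)s^{2}-(\tau^{2}-4)s+\tau(\tau+4)=0.$$
Its discriminant equals $(\tau-2)\bigl(\tau^{3}-2\tau^{2}-20\tau-8\bigr)$, which, since $\tau>2$, has the sign of the cubic $\tau^{3}-2\tau^{2}-20\tau-8$; the largest real root of that cubic is $\tau_{cr}^{(1)}\approx 5.73$. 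Hence for $\tau\le\tau_{cr}^{(1)}$ the quadratic has no real root, while for $\tau>\tau_{cr}^{(1)}$ it produces two candidate values $s_{1}<s_{2}$, both automatically positive since their sum $\tau+2$ and product $\tau(\tau+4)/(\tau-2)$ are positive.

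Next, for each candidate $s$ I would recover $p$ from the linear relation and solve $t^{2}-st+p=0$ for $\{a,b\}$; admissibility of the corresponding boundary law forces sign conditions on $p$ and on $s^{2}-4p$, the latter equaling $-\bigl(s^{2}-2(\tau-2)s+2(\tau+2)\bigr)$. The values of $\tau$ at which a branch crosses the boundary curve $s^{2}-2(\tau-2)s+2(\tau+2)=0$ are exactly the remaining thresholds: computing the common roots of this curve with the quadratic above gives $\tau_{cr}^{(2)}\approx 6.261$ for the branch $s_{2}$ (where it meets the diagonal solution coming from $2a^{2}-\tau a+2=0$) and $\tau=8$ for the branch $s_{1}$ (where the solution collapses onto the trivial point $a=b=1$). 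This organizes the parameter line into the six regimes of the statement, the tangency values $\tau_{cr}^{(2)}$ and $8$ being precisely where a pair of solutions is created, so that the count passes through the intermediate values $2$ and $4$.

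The main obstacle is the passage from admissible algebraic solutions to \emph{distinct} gradient Gibbs measures and the bookkeeping at the critical points. Here I would (i) check the normalisability status of each boundary law in the sense of Definition \ref{Definition 3.4}, so that Theorem \ref{Theorem 3.5} attaches a genuine measure to it; (ii) decide when the reflected solution obtained from $a\leftrightarrow b$ (equivalently, the shift of the sequence (\ref{new1})) yields a new GGM rather than the same one; and (iii) count the merged solutions at $\tau=\tau_{cr}^{(2)}$ and $\tau=8$ with the correct multiplicity, verifying that at each tangency the branch meets the diagonal stratum without being double-counted against Theorem \ref{th1}. I expect steps (ii) and (iii)---the identification of GGMs under the $a\leftrightarrow b$ symmetry and the careful treatment of the bifurcation points---to be the crux of the argument.
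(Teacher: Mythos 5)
Your proposal follows essentially the same route as the paper: subtracting and adding the two equations of (\ref{uff}) on the locus $a\neq b$, passing to the symmetric variables $a+b$ and $ab$, deriving the quadratic $(\tau-2)x^{2}-(\tau^{2}-4)x+\tau^{2}+4\tau=0$ whose discriminant $(\tau-2)(\tau^{3}-2\tau^{2}-20\tau-8)$ yields $\tau_{cr}^{(1)}$, and then obtaining $\tau_{cr}^{(2)}$ and $8$ as the positive zeros of the discriminants of the two quadratics that recover $\{a,b\}$ from each root $x_i$ (via the factorization $(\tau-8)(\tau^{3}-8\tau^{2}+16\tau-32)$). The points you single out as the crux --- normalisability, identification of GGMs under the $a\leftrightarrow b$ swap, and the multiplicity bookkeeping at the tangency values --- are precisely the steps the paper's proof leaves implicit, so your sketch is consistent with, and if anything more explicit than, the published argument.
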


\begin{proof}
Now we consider the case $a\neq b$. Then the system of equations (\ref{uff}) can be written as

\begin{equation} \label{se3}
 \begin{cases}
  a^{3}+ab^{2}+(\tau+2)a=\tau a^{2}+2b^{2}+2;\\[2mm]
  b^{3}+a^{2}b+(\tau+2)b=\tau b^{2}+2a^{2}+2.
   \end{cases}
\end{equation}
Now we subtract the second equation of (\ref{se3}) from the first one and get
$$a^{3}-b^{3}-ab(a-b)+(\tau+2)(a-b)=(\tau-2)(a^{2}-b^{2}).$$
Since $a\neq b$, both sides can be divided by $a-b$ and one gets
\begin{equation} \label{se4}
a^{2}+b^{2}+\tau+2=(\tau-2)(a+b).\end{equation}
By adding the second and first equations of (\ref{se3}), we have
\begin{equation}\label{se5}
(\tau-2)ab=2(\tau-2)(a+b)-2(\tau+1)
\end{equation}
Let $a+b=x$ and $ab=y$. By using (\ref{se4}) and (\ref{se5}) one gets a new system of equations with respect to $x$ and $y$ that is equivalent to (\ref{se3}):
\begin{equation} \label{se6}
 \begin{cases}
  x^{2}-2y+\tau+2=(\tau-2)x\\
  (\tau-2)y=2(\tau-2)x-2(\tau+1)
   \end{cases}
\end{equation}
In order to find the number of solutions of the last system we can consider the following quadratic equation with respect to $x$:
\begin{equation}\label{se7}
(\tau-2)x^{2}-(\tau^{2}-4)x+\tau^{2}+4\tau=0.
\end{equation}

It is easy to check that $$x_{1}=\frac{(\tau-2)(\tau+2)-\sqrt{(\tau-2)(\tau^{3}-2\tau^{2}-20\tau-8)}}{2(\tau-2)}$$ and $$x_{2}=\frac{(\tau-2)(\tau+2)+\sqrt{(\tau-2)(\tau^{3}-2\tau^{2}-20\tau-8)}}{2(\tau-2)}$$
are solutions to the equation (\ref{se7}).
Put $P(\tau)=\tau^{3}-2\tau^{2}-20\tau-8$. From $\tau>0$ it is sufficient to find only positive roots of $P(\tau)$. By Descartes' theorem (e.g. \cite{12}) $P(\tau)$ has at most one positive root. On the other hand, $P(5)<0$ and $P(6)>0$ i.e., by Intermediate Value Theorem $P(\tau)$ has at least one root in the segment $[5,6]$. Hence $P(\tau)$ has exactly one positive root. Let $\tau_{cr} (\tau_{cr}^{(1)}\approx 5.73)$ be the positive root of the polynomial. Consequently, we can conclude that if $2<\tau< \tau_{cr}^{(1)}$ then the system of equations (\ref{se6}) has not any positive solution. Let $\tau=\tau^{(1)}_{cr}$, then the system (\ref{se6}) has exactly one positive root.  For the case $\tau>\tau_{cr}^{(1)}$, then (\ref{se6}) has exactly two positive roots if we can show $x_1>0$. Namely, after short calculations, if $\tau>\tau_{cr}^{(1)}$ then we can show the inequality
$$\frac{(\tau-2)(\tau+2)-\sqrt{(\tau-2)(\tau^{3}-2\tau^{2}-20\tau-8)}}{2(\tau-2)}>0$$
is equivalent to the inequality $\tau^2+4\tau>0$.

For the case $\tau>\tau_{cr}^{(1)}$, from $a+b=x_i$ and $ab=y_i$ ($i\in\{1,2\}$) after short calculations, we have two quadratic equations respectively to $x_1$ and $x_2$:
$$2(\tau-2)a^{2}-(\tau^{2}-4-\sqrt{(\tau-2)(\tau^{3}-2\tau^{2}-20\tau-8)})a+ \ \ \ \  \ \ \  \ \  \ \ \ \  \ \ \ \ \ \ \  \ \ \  \ \ \  \  \ \ \ \  \ \ \ \ \ \ \  \ \ \  \ \ \ $$
$$\ \ \ \  \ \ \  \ \  \ \ \ \  \ \ \ \ \ \ \  \ \ \  \ \ \  \  \ \ \ \  \ \ \ \ \ \ \  \ \ \  \ \ \ +2(\tau^{2}-4-\sqrt{(\tau-2)(\tau^{3}-2\tau^{2}-20\tau-8)}-2(\tau+1))=0$$
and
$$2(\tau-2)a^{2}-(\tau^{2}-4+\sqrt{(\tau-2)(\tau^{3}-2\tau^{2}-20\tau-8)})a+\ \ \ \  \ \ \  \ \  \ \ \ \  \ \ \ \ \ \ \  \ \ \  \ \ \  \  \ \ \ \  \ \ \ \ \ \ \  \ \ \  \ \ \ $$
$$\ \ \ \  \ \ \  \ \  \ \ \ \  \ \ \ \ \ \ \  \ \ \  \ \ \  \  \ \ \ \  \ \ \ \ \ \ \  \ \ \  \ \ \ +2(\tau^{2}-4+\sqrt{(\tau-2)(\tau^{3}-2\tau^{2}-20\tau-8)}-2(\tau+1))=0.$$
The discriminants are
$$D_{1,2}(\tau)=2(\tau-2)(\tau^3-8\tau^{2}+4\tau+40\pm(6-\tau)\sqrt{(\tau-2)(\tau^{3}-2\tau^{2}-20\tau-8)}).$$
Now we find positive zeroes of
$$\tau^3-8\tau^{2}+4\tau+40\pm (6-\tau)\sqrt{(\tau-2)(\tau^{3}-2\tau^{2}-20\tau-8)}=0.$$
A solution to the last equation is also the solution to the following equation:
$$\tau^4-16\tau^3+80\tau^2-160\tau+256=(\tau-8)\left(\tau^3-8 \tau^2+16\tau-32\right)=0.$$
The discriminant of the polynomial $Q(\tau):=\tau^3-8 \tau^2+16\tau-32$ is negative and that is why $Q(\tau)$ has a unique real root. Since
$Q(0)<0$ and $\lim_{\tau\rightarrow \infty}Q(\tau)=+\infty$ this root $\tau_{cr}^{(2)}$ is positive and $\tau_{cr}^{(2)}\approx 6.261$.
Consequently, 8 (resp. $\tau_{cr}^{(2)}$) is a unique real positive root of $D_1(\tau)$ ($D_2(\tau)$).

Finally, we consider the case $\tau=\tau^{(1)}_{cr}$. From above, it is sufficient to solve the following equation:
$$2(\tau-2)a^{2}-(\tau^{2}-4)a+2(\tau^{2}-2\tau-6))=0.$$
Its discriminant is
$$D(\tau)=2(\tau-2)(\tau^3-8\tau^{2}+4\tau+40).$$
It's easy to check $D(\tau^{(1)}_{cr})<0$, thus there no positive solution to (\ref{se6}).
\end{proof}

\section*{Acknowledgements}
The work supported by the fundamental project (number: F-FA-2021-425)  of The Ministry of Innovative Development of the Republic of Uzbekistan. I thank Professor U.A.Rozikov for useful discussions and suggestions
which have improved the paper.

\section*{Statements and Declarations}

{\bf	Conflict of interest statement:}
The author states that there is no conflict of interest.

\section*{Data availability statements}
The datasets generated during and/or analyzed during the current study are available from the corresponding author upon reasonable request.

\end{document}